\documentclass{amsart}

\usepackage{amssymb}
\usepackage{amsmath}
\usepackage{tikz}
\usetikzlibrary{decorations.markings}

\setlength{\textwidth}{15cm}
\setlength{\topmargin}{0cm}
\setlength{\oddsidemargin}{.5cm}
\setlength{\evensidemargin}{.5cm}
\setlength{\textheight}{21.5cm}

\newtheorem{thm}{Theorem}[section]
\newtheorem{prop}[thm]{Proposition}
\newtheorem{lem}[thm]{Lemma}
\newtheorem{cor}[thm]{Corollary}

\theoremstyle{definition}
\newtheorem{definition}[thm]{Definition}
\newtheorem{example}[thm]{Example}

\theoremstyle{remark}

\newtheorem{remark}[thm]{Remark}

\numberwithin{equation}{section}

\DeclareMathOperator{\area}{Area}
\DeclareMathOperator{\lk}{lk}

\begin{document}

\title[Strictly systolic angled complexes and hyperbolicity]{Strictly systolic angled complexes and hyperbolicity of one-relator groups}

\author{Mart\'in Axel Blufstein}
\author{Elias Gabriel Minian}
\address{Departamento  de Matem\'atica - IMAS\\
 FCEyN, Universidad de Buenos Aires. Buenos Aires, Argentina.}
\email{mblufstein@dm.uba.ar ; gminian@dm.uba.ar}

\thanks{Researchers of CONICET. Partially supported by grant UBACyT 20020160100081BA}

\subjclass[2010]{20F67, 20F65, 20F06, 57M07, 57M20.}

\keywords{Hyperbolic groups, one-relator groups, systolic complexes.}

\begin{abstract}
We introduce the notion of strictly systolic angled complexes. They generalize Januszkiewickz and \'Swi\c{a}tkowski's $7$-systolic simplicial complexes and also their metric counterparts, which appear as natural analogues to Huang and Osajda's metrically systolic simplicial complexes in the context of negative curvature. We prove that strictly systolic angled complexes and the groups that act on them geometrically, together with their finitely presented subgroups, are hyperbolic. We use these complexes to study the geometry of one-relator groups without torsion, and prove hyperbolicity of such groups under a metric small cancellation hypothesis, weaker than $C'(1/6)$ and $C'(1/4)-T(4)$.
\end{abstract}

\maketitle

\section{Introduction}

A strictly systolic angled complex is a simplicial complex in which we admit multiple edges between vertices, such that the $2$-skeleton is a nonnegative angled $2$-complex in the sense of Wise \cite{Wi} and with a link condition similar to Huang and Osajda's $2\pi$-large condition for metrically systolic simplicial complexes \cite{HO}. This new notion is flexible enough to include objects of combinatorial nature, such as Januszkiewickz and \'Swi\c{a}tkowski's $7$-systolic simplicial complexes \cite{JS}, and also of geometric nature, such as a variation, for negative curvature, of Huang and Osajda's metrically systolic simplicial complexes. We will show that these complexes satisfy a linear isoperimetric inequality, combining ideas of the weight tests of Gersten \cite{Ger} and Huck and Rosebrock \cite{HR}, with reduction methods for diagrams similar to those of Januszkiewickz and \'Swi\c{a}tkowski \cite{JS}. In particular, groups acting properly and cocompactly by simplicial automorphisms on these complexes are hyperbolic. Moreover, by a result of Hanlon and Mart\'inez-Pedroza \cite[Theorem 1.1]{HP} that is based on equivariant towers, the finitely presented subgroups of such groups are also hyperbolic.

We use strictly systolic angled complexes to investigate the geometry of one-relator groups. It is well known that all one-relator groups with torsion are hyperbolic. This is an immediate corollary of Newman's Spelling Theorem \cite{New}. Recall that a one-relator group has torsion if and only if its relation is a proper power (see \cite{KMS}). On the other hand, the geometry of one-relator groups without torsion is more intricated. Ivanov and Schupp described hyperbolicity in some classes of one-relator groups \cite{IS}. Recently Gardam and Woodhouse exhibited a family of one-relator groups $R_{pq}$ which satisfy a polynomial isoperimetric inequality. They showed that the groups $R_{pq}$ have no subgroups isomorphic to a Baumslag-Solitar group $BS(m,n)$ (with $m\neq n,-n$) and they are neither automatic nor $CAT(0)$ \cite{GW}.

It is well known that the small cancellation conditions $C'(1/6)$ and $C'(1/4)-T(4)$ imply hyperbolicity (see \cite{GS,Gro}). We will show that a weaker small cancellation hypothesis, which generalizes both $C'(1/6)$ and $C'(1/4)-T(4)$, suffices to prove hyperbolicity of one-relator groups. We include some examples that illustrate the strength and simplicity of this result.

\section{Strictly systolic angled complexes}

We will work with a less rigid notion of simplicial complex. A \textit{quasi-simplicial complex} is a simplicial complex in which we admit multiple edges between vertices. We do not however allow neither loops nor $2$-simplices with two or more edges in common. All the complexes that we deal with are assumed to be locally finite. We say that a quasi-simplicial complex $X$ is \textit{3-flag} if every time $X$ has three faces of a tetrahedron, then the whole tetrahedron is in $X$ (and in particular, the fourth face is in $X$). Note that a flag simplicial complex is, in particular, a $3$-flag quasi-simplicial complex.

Similary as in \cite{Ger, Wi}, we will define a weight function $w$ on the \textit{corners} of the $2$-simplices of $X$. Given a vertex $v\in X$ we denote by $\lk(v)$ the geometric link of $v$ in the $2$-skeleton $X^{(2)}$. The function $w$ assigns a nonnegative value to each edge in $\lk(v)$ for every vertex $v\in X$ (i.e. the $2$-skeleton $X^{(2)}$ is a nonnegative angled $2$-complex in the sense of \cite{Wi}). We require that the image of $w$ is finite and that $w$ satisfies a weak triangular inequality: for any vertex $v$, if $\alpha_{ij}$ is an edge in $\lk(v)$ from $v_i$ to $v_j$ then $w(\alpha_{13})\leq w(\alpha_{12})+w(\alpha_{23})$. The complex $X$ together with a fixed weight function is called an \textit{angled complex}.

Following \cite{HO}, we say that a simple cycle $\sigma$ of length greater than 3 in the link of a vertex $v\in X$ is \textit{$2$-full} if there is no edge in $\lk(v)$ that connects two vertices having a common neighbour in $\sigma$. The angular length of a path in the link of a vertex in an angled complex is the sum of the weights of its edges. An angled complex $X$ is \textit{locally $2\pi$-large} if every $2$-full cycle in every vertex link has angular length $\geq2\pi$.

\begin{definition}
A simply connected, locally $2\pi$-large, $3$-flag, angled complex in which the sum of the internal weights of each triangle is (strictly) less than $\pi$ is called a \textit{strictly systolic angled complex}.
\end{definition}

We will show that strictly systolic angled complexes satisfy a linear isoperimetric inequality. The proof will follow ideas of Gersten \cite{Ger}, Huck and Rosebrock \cite{HR}, Wise \cite{Wi}, Januszkiewickz and \'Swi\c{a}tkowski \cite{JS}, and Huang and Osajda \cite{HO}.

A \textit{singular disk} is a simply connected and planar combinatorial 2-complex whose cells are simplices. Let $X$ be a strictly systolic angled complex, and let $\gamma:S\to X^{(1)}$ be a simplicial map from a triangulation of $S^1$ to the $1$-skeleton of $X$. Its image is a closed edge-path in $X$ which we will also denote by $\gamma$. A \textit{singular diagram} for $\gamma$ is a simplicial map $f:D \to X$ from a singular disk such that $f|_{\partial D} = \gamma$. 

\begin{remark}
Since $X$ is simply connected, every closed edge-path in $X$ admits a singular diagram. This is a direct consequence of the relative simplicial approximation theorem.
\end{remark}

We define the area of a closed edge-path  $\gamma$ in $X$ as
$$\area(\gamma) = \min\{|D| : f:D\to X \text{ is a singular diagram for } \gamma\},$$
where $|D|$ denotes the number of faces ($2$-simplices) in the corresponding singular disk $D$. Let $l(\gamma)$ denote the number of edges in $\gamma$. Our aim is to show that there exists a constant $K>0$ such that $\area(\gamma) \leq Kl(\gamma)$ for every closed edge-path $\gamma$. To prove this we will need well behaved diagrams. A singular diagram is said to be \textit{nondegenerate} if it is injective in every simplex.

\begin{lem}[\cite{JS}, Lemma 1.6]\label{nondegenerate}
Let $\gamma$ be an homotopically trivial closed edge-path in a quasi-simplicial complex $X$. Then there exists a nondegenerate singular diagram for $\gamma$.
\end{lem}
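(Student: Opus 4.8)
The plan is to prove that every homotopically trivial closed edge-path $\gamma$ admits a nondegenerate singular diagram by starting with any singular diagram (which exists by the remark, using relative simplicial approximation) and then performing a sequence of local reductions that strictly decrease some complexity measure while preserving the boundary $\gamma$, until no degeneracy remains. The natural complexity to track is the number of faces $|D|$, or lexicographically the pair consisting of the number of $2$-simplices together with the number of edges; since $|D|$ is a nonnegative integer, any process that strictly reduces it must terminate.

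First I would enumerate the ways a singular diagram $f:D\to X$ can fail to be injective on a simplex. Degeneracy means $f$ identifies two distinct vertices of some simplex of $D$: either two endpoints of an edge are sent to the same vertex of $X$ (so the edge collapses to a point), or a $2$-simplex has two of its three vertices identified (so a face folds onto an edge), or a $2$-simplex is sent entirely to a single vertex. The strategy is to remove each such defect by a purely combinatorial surgery on the singular disk $D$. For a collapsed edge $e$ whose two endpoints map to the same vertex of $X$, I would contract $e$ in $D$: because $X$ is a quasi-simplicial complex with no loops, any $2$-simplex containing $e$ must then have another pair of identified vertices, and the whole configuration can be excised and the disk re-triangulated across the resulting hole. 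This operation keeps $D$ a simply connected planar complex, does not alter $f|_{\partial D}=\gamma$ (one must check the collapsing edge is interior, or handle a boundary collapse by reparametrizing $\gamma$'s triangulation), and strictly lowers the face count.

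The key steps in order would be: (1) fix a diagram minimizing $|D|$ among all singular diagrams for $\gamma$; (2) argue that a minimal diagram cannot contain a $2$-simplex mapped to a single edge or a single vertex, since such a degenerate triangle can be deleted and the hole filled by the surrounding combinatorics, contradicting minimality; (3) argue that no interior edge can collapse to a point for the same reason; and (4) conclude that the minimizing diagram is automatically injective on every simplex, i.e. nondegenerate. The cleanest formulation is thus \emph{a singular diagram of minimal area is nondegenerate}, which simultaneously gives existence. The boundary constraint requires care: one must verify that each reduction can be performed in the interior, or otherwise modify the triangulation of $S^1$ without changing the edge-path $\gamma$ in $X$.

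The main obstacle I anticipate is step (2)–(3): making the excision-and-refill surgery precise while guaranteeing that the result is still a \emph{singular disk} (planar, simply connected, with simplicial cell structure) and still maps simplicially to $X$. When a degenerate face is removed, its boundary in $D$ need not bound an embedded disk cleanly, and the retriangulation must be compatible with the simplicial structure of the quasi-simplicial complex $X$ — in particular one must respect the prohibition on loops and on $2$-simplices sharing two edges. Since this is exactly the content of \cite[Lemma 1.6]{JS}, I would either cite that argument directly or adapt its reduction moves to the quasi-simplicial setting, the only genuinely new point being that multiple edges between a pair of vertices of $X$ are allowed, which must be checked not to obstruct any of the collapsing moves.
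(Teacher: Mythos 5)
Your proposal is correct and ends up in essentially the same place as the paper: the paper gives no independent proof of this lemma, citing \cite[Lemma 1.6]{JS} directly, and your minimal-area reduction sketch is precisely a reconstruction of that argument, with your fallback being the same citation. The one genuinely new point you flag --- adapting the moves to the quasi-simplicial setting with multiple edges --- is exactly what the paper addresses in the remark immediately following the lemma, where it notes that nondegeneracy survives but the conclusion that the diagram can be taken simplicial does not.
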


\begin{remark}
Januszkiewickz and \'Swi\c{a}tkowski also showed that the nondegenerate diagram can be taken simplicial. However, we cannot assure this since $X$ is quasi-simplicial.
\end{remark}

By Lemma \ref{nondegenerate} we may assume that the diagrams $f:D\to X$ are nondegenerate. We now adopt a terminology from \cite{HR}. Given a singular diagram $f:D\to X$, we say it is \textit{vertex reduced} if it maps the link of every vertex of $D$ to a path in the link of a vertex in $X$ in which no edge is passed twice in opposite directions. Suppose we have a non vertex reduced diagram $f:D\to X$, with $X$ a strictly systolic angled complex. If we are in the situation where a troublesome link has length 2, the diagram locally looks like in Figure \ref{figone}. We can identify the edges $e_1$ and $e_2$ and collapse the corresponding faces to obtain a new diagram. We call this move an \textit{edge reduction} (see Figure \ref{figone}).

\begin{figure}[h]
     \begin{tikzpicture}[scale=0.5]
    \draw[thick] (0,0) circle (2.5);
    \draw [thick, draw=black, fill=black, fill opacity=0.1] (0,0) circle (2.5);
    \draw[thick] (0,-2.5) -- (0,2.5);
    \filldraw (0,0) circle (2pt);
    \filldraw (0,2.5) circle (2pt);
    \filldraw (0,-2.5) circle (2pt);
    \node[right] at (2.5,0) {$e_1$};
    \node[left] at (-2.5,0) {$e_2$};
    
    \draw[thick,->] (3.5,0) -- (5,0);
    
    \draw[thick] (6,2.5) -- (6,-2.5);
    \filldraw (6,2.5) circle (2pt);
    \filldraw (6,-2.5) circle (2pt);
    \end{tikzpicture}
      \caption{Edge reduction.}
\label{figone}
\end{figure}
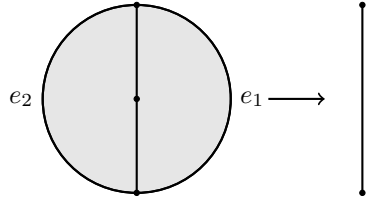

If the link has length greater than 2, we can apply a sequence of diamond moves as in \cite{CH,HR}, followed by an edge reduction. This is shown in Figure \ref{figtwo}. Note that, if the diagram is not vertex reduced, there is a vertex $v$ and two triangles in $D$ which are incident to $v$ that are mapped to the same triangle in $X$. In particular there are two pairs of edges such that each pair is mapped to the same edge. One of these pairs is shown if Figure \ref{figtwo}. For a detailed exposition on diamond moves see \cite{CH}.

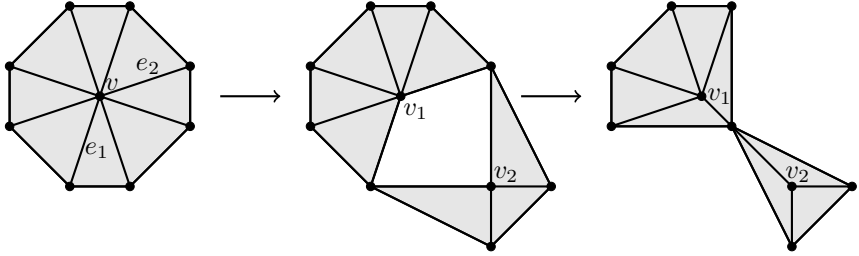
\begin{figure}[h]
     \begin{tikzpicture}[scale=0.8]
    \draw [thick] (0,0) -- (1,0) -- (2,1) -- (2,2) -- (1,3) -- (0,3) -- (-1,2) -- (-1,1) -- cycle;
    \filldraw (0,0) circle (2pt);
    \filldraw (1,0) circle (2pt);
    \filldraw (2,1) circle (2pt);
    \filldraw (2,2) circle (2pt);
    \filldraw (1,3) circle (2pt);
    \filldraw (0,3) circle (2pt);
    \filldraw (-1,2) circle (2pt);
    \filldraw (-1,1) circle (2pt);
    \filldraw (0.5,1.5) circle (2pt);
    \node at (0.7,1.7) {$v$};
    \node at (0.45,0.6) {$e_1$};
    \node at (1.3,2) {$e_2$};
    \draw[thick] (0.5,1.5) -- (0,0);
    \draw[thick] (0.5,1.5) -- (1,0);
    \draw[thick] (0.5,1.5) -- (2,1);
    \draw[thick] (0.5,1.5) -- (2,2);
    \draw[thick] (0.5,1.5) -- (1,3);
    \draw[thick] (0.5,1.5) -- (0,3);
    \draw[thick] (0.5,1.5) -- (-1,2);
    \draw[thick] (0.5,1.5) -- (-1,1);
    \draw [thick, draw=black, fill=black, fill opacity=0.1] (0,0) -- (1,0) -- (2,1) -- (2,2) -- (1,3) -- (0,3) -- (-1,2) -- (-1,1) -- cycle;
    
    \draw[thick,->] (2.5,1.5) -- (3.5,1.5);
    
    \draw [thick] (7,2) -- (6,3) -- (5,3) -- (4,2) -- (4,1) -- (5,0);
    \filldraw (7,2) circle (2pt);
    \filldraw (6,3) circle (2pt);
    \filldraw (5,3) circle (2pt);
    \filldraw (4,2) circle (2pt);
    \filldraw (4,1) circle (2pt);
    \filldraw (5,0) circle (2pt);
    \filldraw (5.5,1.5) circle (2pt);
    \node at (5.75,1.25) {$v_1$};
    \node at (7.25,0.2) {$v_2$};
    \filldraw (7,0) circle (2pt);
    \draw[thick] (5,0) -- (5.5,1.5) -- (7,2);
    \draw[thick] (5,0) -- (7,0) -- (7,2);
    \draw[thick] (5.5,1.5) -- (6,3);
    \draw[thick] (5.5,1.5) -- (5,3);
    \draw[thick] (5.5,1.5) -- (4,2);
    \draw[thick] (5.5,1.5) -- (4,1);
    \filldraw (8,0) circle (2pt);
    \filldraw (7,-1) circle (2pt);
    \draw[thick] (8,0) -- (7,2);
    \draw[thick] (7,-1) -- (5,0);
    \draw[thick] (7,-1) -- (8,0);
    \draw[thick] (7,0) -- (7,-1);
    \draw[thick] (7,0) -- (8,0);
    \draw [thick, draw=black, fill=black, fill opacity=0.1] (7,2) -- (6,3) -- (5,3) -- (4,2) -- (4,1) -- (5,0) -- (5.5,1.5) -- cycle;
    \draw [thick, draw=black, fill=black, fill opacity=0.1] (7,2) -- (8,0) -- (7,-1) -- (5,0) -- (7,0) -- cycle;
    
    \draw[thick,->] (7.5,1.5) -- (8.5,1.5);
    
    \draw [thick] (11,1) -- (11,3) -- (10,3) -- (9,2) -- (9,1) -- (11,1);
    \filldraw (11,1) circle (2pt);
    \filldraw (11,3) circle (2pt);
    \filldraw (10,3) circle (2pt);
    \filldraw (9,2) circle (2pt);
    \filldraw (9,1) circle (2pt);
    \filldraw (10.5,1.5) circle (2pt);
    \node at (10.8,1.5) {$v_1$};
    \node at (12.1,0.2) {$v_2$};
    \filldraw (12,0) circle (2pt);
    \draw[thick] (10.5,1.5) -- (11,3);
    \draw[thick] (10.5,1.5) -- (10,3);
    \draw[thick] (10.5,1.5) -- (9,2);
    \draw[thick] (10.5,1.5) -- (9,1);
    \draw[thick] (10.5,1.5) -- (11,1);
    \filldraw (13,0) circle (2pt);
    \filldraw (12,-1) circle (2pt);
    \draw[thick] (13,0) -- (11,1);
    \draw[thick] (12,-1) -- (11,1);
    \draw[thick] (12,-1) -- (13,0);
    \draw[thick] (12,0) -- (12,-1);
    \draw[thick] (12,0) -- (13,0);
    \draw[thick] (12,0) -- (11,1);
    \draw [thick, draw=black, fill=black, fill opacity=0.1] (11,1) -- (11,3) -- (10,3) -- (9,2) -- (9,1) -- (11,1);
    \draw [thick, draw=black, fill=black, fill opacity=0.1] (11,1) -- (13,0) -- (12,-1) -- (11,1);
    \end{tikzpicture}
      \caption{Diamond move. Edges $e_1$ and $e_2$ are mapped to the same edge.}
\label{figtwo}
\end{figure}

Note that these moves reduce the number of faces of the diagram. Therefore, starting with a nondegenerate singular diagram, we can obtain a nondegenerate and vertex reduced singular diagram with the same boundary. Observe that in such a diagram the image of the link of every vertex can be decomposed in simple paths. In particular, the image of the link of every interior vertex can be decomposed in simple cycles.

\begin{lem}\label{triangulated}
Let $X$ be a strictly systolic angled complex, $v$ a vertex of $X$ and $\sigma$ a simple cycle in $\lk(v)$. Then $\sum_{c\in\sigma}w(c)\geq 2\pi$ or $\sigma$ is the boundary of a triangulated disk without interior vertices, whose edges are in $\lk(v)$.
\end{lem}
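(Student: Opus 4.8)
The plan is to argue by induction on the length $n$ of the simple cycle $\sigma$, using local $2\pi$-largeness to extract a chord whenever the angular length of $\sigma$ is small, and the weak triangular inequality to keep the angular length under control as the cycle is shortened. The only hypotheses of a strictly systolic angled complex I expect to use are local $2\pi$-largeness and the weak triangular inequality; neither $3$-flagness, simple connectivity, nor the condition on internal triangle weights should be needed for this statement, since the filling disk is only required to have its edges in $\lk(v)$.

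For the base case $n=3$ the cycle is already a triangle, and since its three edges lie in $\lk(v)$ by hypothesis it bounds the single $2$-simplex on its three vertices, a triangulated disk with no interior vertices. For the inductive step assume $n>3$ and that $\sum_{c\in\sigma}w(c)<2\pi$, since otherwise the first alternative holds and there is nothing to prove. As $X$ is locally $2\pi$-large and $\sigma$ has angular length strictly below $2\pi$, the cycle $\sigma$ cannot be $2$-full. By the definition of $2$-fullness there are then two vertices $a,b$ of $\sigma$ sharing a common neighbour $p$ along $\sigma$ and joined by an edge $e=\alpha_{ab}$ of $\lk(v)$. I would form $\sigma'$ by deleting $p$ and replacing the length-two subpath through $p$ by the single edge $e$. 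Because $n>3$ the vertices other than $p$ stay distinct and $a,b$ are not adjacent in $\sigma$, so $e$ is a genuinely new edge and $\sigma'$ is a simple cycle of length $n-1\ge 3$ with all edges in $\lk(v)$.

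The key estimate is that $\sigma'$ inherits a small angular length. Applying the weak triangular inequality to the triangle $a,p,b$ of $\lk(v)$ gives $w(\alpha_{ab})\le w(\alpha_{ap})+w(\alpha_{pb})$, whence $\sum_{c'\in\sigma'}w(c') = \sum_{c\in\sigma}w(c) - w(\alpha_{ap}) - w(\alpha_{pb}) + w(\alpha_{ab}) \le \sum_{c\in\sigma}w(c) < 2\pi$. Thus the first alternative fails for $\sigma'$, so by the inductive hypothesis $\sigma'$ bounds a triangulated disk $D'$ with no interior vertices and edges in $\lk(v)$. Gluing the triangle $a,p,b$ (its edges $\alpha_{ap},\alpha_{pb}$ lie on $\sigma$ and its edge $\alpha_{ab}=e$ lies in $\lk(v)$) onto $D'$ along the boundary edge $e$ yields a disk $D$ whose boundary is $\sigma$, on which the reinstated vertex $p$ lies, so $D$ has no interior vertices and every edge in $\lk(v)$, completing the induction.

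I expect the genuine content to be the interplay of the two hypotheses rather than any single hard step: local $2\pi$-largeness supplies a chord exactly when the angular budget drops below $2\pi$, while the weak triangular inequality guarantees this budget does not grow on passing to $\sigma'$, so the induction descends to the trivial length-$3$ case. The remaining care is purely combinatorial bookkeeping — verifying that $\sigma'$ is simple and that attaching a triangle along a boundary edge of a disk neither identifies vertices nor creates interior ones — which is where I anticipate the only (minor) subtleties.
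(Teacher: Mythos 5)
Your proposal is correct and follows essentially the same argument as the paper: induction on the length of $\sigma$, using failure of $2$-fullness to produce a chord $e$ cutting off a triangle, the weak triangular inequality to control the angular length of the shortened cycle, and gluing the triangle back onto the disk provided by the inductive hypothesis. The only difference is organizational (you assume $\sum_{c\in\sigma}w(c)<2\pi$ upfront and argue contrapositively, while the paper splits into the two cases after applying the inductive hypothesis), and your observation that only local $2\pi$-largeness and the triangular inequality are needed is accurate.
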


\begin{proof}
We proceed by induction on the length of $\sigma$. There are no cycles of length 2, because $X$ is quasi-simplicial. If it has length 3, then the claim trivially holds.
Suppose $\sigma$ has more than 3 edges. If it is $2$-full, then it has angular length $\geq 2\pi$. If it is not $2$-full, then there exists an edge $e$ in $\lk(v)$ that connects two vertices of $\sigma$ with a common neighbour. This edge subdivides $\sigma$ in two paths: one of length 2, and another one of length $l(\sigma)-2$, which we call $\sigma_1$ and $\sigma_2$ respectively (see Figure \ref{figthree}). By inductive hypothesis, $\sigma_2\cup e$ either subdivides into triangles or has angular length $\geq 2\pi$. If it subdivides, it induces a subdivision for $\sigma$. In the other case, by the triangle inequality we have
\[
2\pi \leq \sum_{c\in \sigma_2\cup e}\omega(c) \leq \sum_{c\in \sigma_2\cup\sigma_1}\omega(c) = \sum_{c\in \sigma}\omega(c).
 \qedhere
 \] 
\end{proof}
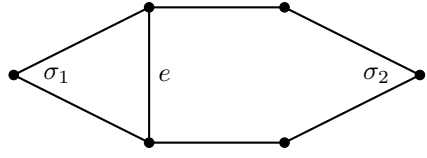
\begin{figure}[h]
    \begin{tikzpicture}[scale=0.9]
    \draw[thick] (0,0) -- (2,-1) -- (4,-1) -- (6,0) -- (4,1) -- (2,1) --cycle;
    \filldraw (0,0) circle (2pt);
    \filldraw (2,-1) circle (2pt);
    \filldraw (4,-1) circle (2pt);
    \filldraw (6,0) circle (2pt);
    \filldraw (4,1) circle (2pt);
    \filldraw (2,1) circle (2pt);
    \draw[thick] (2,1) -- (2,-1);
    \node[right] at (2,0) {$e$};
    \node[right] at (0.3,0) {$\sigma_1$};
    \node[left] at (5.7,0) {$\sigma_2$};
    \end{tikzpicture}  
    \caption{Subdividing $\sigma$.}
\label{figthree}
\end{figure}

\begin{lem}\label{reduced}
Let $X$ be a strictly systolic angled complex and $f:D\to X$ a nondegenerate and vertex reduced singular diagram for a closed edge-path $\gamma$. Then there exists a nondegenerate and vertex reduced singular diagram $g:D'\to X$ for $\gamma$ such that the image by $g$ of the links of the interior vertices of $D'$ can be decomposed in simple cycles of angular length $\geq2\pi$. 
\end{lem}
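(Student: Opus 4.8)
The plan is to argue by minimizing the number of faces. Among all nondegenerate and vertex reduced singular diagrams for $\gamma$ --- such diagrams exist by Lemma~\ref{nondegenerate} together with the edge and diamond reductions described above --- I would choose one, $g\colon D'\to X$, for which $|D'|$ is smallest, and claim that this $g$ already has the required property. Since $g$ is vertex reduced, the image by $g$ of the link of every interior vertex $u$ is a non-backtracking closed edge-path in $\lk(g(u))$, and as observed before Lemma~\ref{triangulated} it decomposes into simple cycles. It therefore suffices to show that in a minimal diagram none of these simple cycles can have angular length $<2\pi$.

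Suppose, for contradiction, that some interior vertex $u$ has a simple cycle $\sigma$ in this decomposition with $\sum_{c\in\sigma}w(c)<2\pi$. By Lemma~\ref{triangulated}, $\sigma$ bounds a triangulated disk $\Delta$ in $\lk(g(u))$ with no interior vertices and all edges in $\lk(g(u))$. The cycle $\sigma$ is the image of a fan of consecutive triangles around $u$, say $ux_1x_2,\ \ldots,\ ux_nx_{n+1}$ with $g(x_k)=v_k$ for $1\le k\le n$ and $g(x_{n+1})=v_1$, where $v_1,\ldots,v_n$ are the cyclically ordered vertices of $\sigma$; in particular the two bounding spokes $ux_1$ and $ux_{n+1}$ are sent to the same edge $g(u)v_1$ of $X$. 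Each triangle of $\Delta$ corresponds, via the link correspondence, to a tetrahedron of $X$ with apex $g(u)$, and since $X$ is $3$-flag these tetrahedra, and in particular their faces opposite to $g(u)$, are genuine simplices of $X$. I would then retriangulate the region filled by this fan using the combinatorial pattern of $\Delta$, mapping its triangles to these opposite faces, to produce a new diagram $g'\colon D''\to X$ agreeing with $g$ off the fan. As $\Delta$ has $n-2$ triangles while the fan has $n$, this surgery removes two faces, so $|D''|=|D'|-2$.

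The diagram $g'$ need no longer be nondegenerate or vertex reduced, but applying Lemma~\ref{nondegenerate} and then the edge and diamond reductions produces a nondegenerate, vertex reduced diagram for $\gamma$ with at most $|D''|=|D'|-2<|D'|$ faces, contradicting minimality and proving the claim. Since all the surgery takes place in the interior of the disk, the boundary path $\gamma$ is unchanged throughout, so the resulting minimizer is the desired $g$.

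The main obstacle is making the fan surgery precise, and it is purely combinatorial. When the image of $\lk_{D'}(u)$ is a single simple cycle the fan is the whole star of $u$, the two spokes coincide, and one simply cones $\lk_{D'}(u)$ off through $\Delta$ rather than through $u$, deleting $u$ altogether. When the image decomposes into several simple cycles the fan is a proper slice of the star whose two bounding spokes are \emph{distinct} edges of $D'$ mapping to a single edge of $X$; here one must cut $D'$ along these two spokes, fill the resulting $n$-gon boundary (whose image is the closed cycle $\sigma$) according to the pattern of $\Delta$, and reglue, checking that the outcome is again a simply connected planar complex mapping simplicially to $X$. The weak triangle inequality for $w$ and the $3$-flag hypothesis are precisely what guarantee, respectively, that a deficient cycle is fillable (Lemma~\ref{triangulated}) and that the filling triangles genuinely belong to $X$.
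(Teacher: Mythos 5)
Your proposal is correct and follows essentially the same route as the paper: isolate the deficient simple cycle (your cut-and-reglue along the two spokes is precisely the paper's diamond move), fill it with the triangulated disk provided by Lemma~\ref{triangulated} using the $3$-flag hypothesis, and conclude because the surgery strictly decreases the number of faces (the paper runs this as a terminating iteration where you phrase it as minimality plus contradiction, which is the same argument). One small inaccuracy: Lemma~\ref{nondegenerate} gives no control on the number of faces, so it cannot be invoked to restore nondegeneracy while keeping at most $|D''|$ faces --- but it is not needed, since the filling triangles have distinct vertices in $\lk(g(u))$ and map injectively, so the surgered diagram is automatically nondegenerate (as the paper observes), and only the face-decreasing vertex reductions are required afterwards.
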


\begin{proof}
Let $v$ be an interior vertex of $D$ such that one of the simple cycles in the decomposition of $f(\lk(v))$, say $\sigma$, has angular length $<2\pi$. If $\sigma$ is not the only simple cycle in the decomposition of $f(\lk(v))$, then there exist two edges $e_1$ and $e_2$ incident to $v$ satisfying $f(e_1)=f(e_2)$. By a diamond move, we can obtain a new nondegenerate singular diagram $f'$ for $\gamma$ with the same number of faces and new vertices $v_1$ and $v_2$ (see Figure \ref{figtwo}). Therefore we can assume that $\sigma$ is the only simple cycle in the decomposition of $f(\lk(v_1))$. Since its angular length is $<2\pi$, by Lemma \ref{triangulated} it subdivides in triangles in $\lk(f(v_1))$. This corresponds with the situation shown in Figure \ref{figfour}. Since $X$ is $3$-flag, we can modify $f$ removing the troublesome vertex $v_1$, as shown in Figure \ref{figfive}.

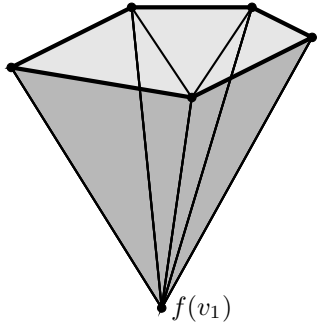
\begin{figure}[h]
\begin{tikzpicture}[scale=0.8]
\draw[ultra thick] (0,0) -- (3,-1/2) -- (5,1/2) -- (4,1) -- (2,1) -- cycle;
\draw[thick] (2,1) -- (3,-1/2) -- (4,1);
\filldraw (3,-1/2) circle (2pt);
\filldraw (5,1/2) circle (2pt);
\filldraw (4,1) circle (2pt);
\filldraw (2,1) circle (2pt);
\filldraw (0,0) circle (2pt);
\filldraw (5/2,-4) circle (2pt);
\draw[thick] (5/2,-4) -- (0,0);
\draw[thick] (5/2,-4) -- (3,-1/2);
\draw[thick] (5/2,-4) -- (5,1/2);
\draw[thick] (5/2,-4) -- (4,1);
\draw[thick] (5/2,-4) -- (2,1);
\draw [thick, draw=black, fill=black, fill opacity=0.2] (0,0) -- (5/2,-4) -- (3,-1/2) -- cycle;
\draw [thick, draw=black, fill=black, fill opacity=0.2] (5,1/2) -- (5/2,-4) -- (3,-1/2) -- cycle;
\draw [thick, draw=black, fill=black, fill opacity=0.1] (4,1) -- (5/2,-4) -- (5,1/2) -- cycle;
\draw [thick, draw=black, fill=black, fill opacity=0.1] (2,1) -- (5/2,-4) -- (4,1) -- cycle;
\draw [thick, draw=black, fill=black, fill opacity=0.1] (2,1) -- (5/2,-4) -- (0,0) -- cycle;
\node[right] at (5/2,-4) {$f(v_1)$};
\end{tikzpicture}
\caption{The cycle $\sigma$ subdivides in $\lk(f(v_1))$.}
\label{figfour}
\end{figure}

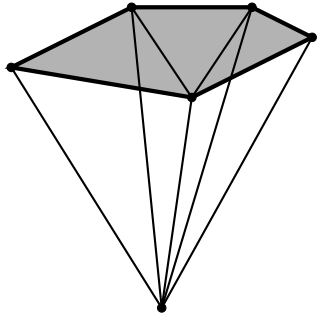
\begin{figure}[h]
\begin{tikzpicture}[scale=0.8]
\draw[ultra thick] (0,0) -- (3,-1/2) -- (5,1/2) -- (4,1) -- (2,1) -- cycle;
\draw[thick] (2,1) -- (3,-1/2) -- (4,1);
\filldraw (3,-1/2) circle (2pt);
\filldraw (5,1/2) circle (2pt);
\filldraw (4,1) circle (2pt);
\filldraw (2,1) circle (2pt);
\filldraw (0,0) circle (2pt);
\filldraw (5/2,-4) circle (2pt);
\draw[thick] (5/2,-4) -- (0,0);
\draw[thick] (5/2,-4) -- (3,-1/2);
\draw[thick] (5/2,-4) -- (5,1/2);
\draw[thick] (5/2,-4) -- (4,1);
\draw[thick] (5/2,-4) -- (2,1);
\draw [thick, draw=black, fill=black, fill opacity=0.3] (0,0) -- (3,-1/2) -- (5,1/2) -- (4,1) -- (2,1) -- cycle;
\end{tikzpicture}
\caption{Removal of $v_1$.}
\label{figfive}
\end{figure}

After applying this change, we obtain a new nondegenerate singular diagram for $\gamma$ with less faces. Then we can make it vertex reduced by reducing the number of faces once again, and continue with this process. Since the number of faces decreases at each step, the process stops and we obtain a nondegenerate and vertex reduced singular diagram $g:D'\to X$ for $\gamma$, which satisfies the desired conditions.
\end{proof}

 If $X$ is a strictly systolic angled complex and $f:D\to X$ is a singular diagram with $f$ nondegenerate, we can pullback the weights of the corners of $X$ to $D$. We will apply the combinatorial Gauss-Bonnet theorem to the angled $2$-complex $D$. We recall first  some combinatorial notions of curvature from \cite{Wi}.

\begin{definition}
Let $L$ be an angled $2$-complex whose cells are simplices. If $v$ is a vertex of $L$, the curvature of $v$ is defined as $\kappa(v) = 2\pi - \pi\chi(\lk(v)) - (\sum_{c\in v}\omega(c))$, where the sum is taken over all the corners at $v$, and $\chi$ denotes the Euler characteristic. The curvature of a face ($2$-simplex) $F$ is
$\kappa(F) = (\sum_{c\in F}\omega(c)) - \pi$, where the sum is taken over all corners in $F$.
\end{definition}

\begin{thm}[Combinatorial Gauss-Bonnet Theorem (\cite{BB,Wi})]
Let $L$ be an angled $2$-complex. Then
$$\sum_{F\in \text{faces}(L)}\kappa(F) + \sum_{v\in L^{(0)}}\kappa(v) = 2\pi\chi(L).$$
\end{thm}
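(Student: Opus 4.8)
The plan is to prove the identity by a global double count in which the angle weights cancel completely, reducing it to the definition of the Euler characteristic. I would argue in the generality asserted by the statement, so each face $F$ is a polygonal $2$-cell; writing $n_F$ for its number of corners, its curvature is $\kappa(F)=\bigl(\sum_{c\in F}\omega(c)\bigr)-(n_F-2)\pi$, which specializes to $\sum_{c\in F}\omega(c)-\pi$ when $F$ is a triangle, as in the definition above.

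First I would sum the two families of curvatures. Over faces,
\[
\sum_{F}\kappa(F)=\Bigl(\sum_{F}\sum_{c\in F}\omega(c)\Bigr)-\pi\sum_F(n_F-2),
\]
while over vertices,
\[
\sum_{v}\kappa(v)=2\pi|L^{(0)}|-\pi\sum_v\chi(\lk(v))-\sum_v\sum_{c\in v}\omega(c).
\]
The crucial point is that each corner of $L$ lies in exactly one face and sits at exactly one vertex, so both weighted sums equal $\sum_c\omega(c)$ taken over all corners. Hence upon adding the two displays the weight terms cancel identically; this cancellation is insensitive to the shapes of the faces, which is exactly what lets the argument cover arbitrary polygonal cells and not only triangles. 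What survives is
\[
\sum_F\kappa(F)+\sum_v\kappa(v)=2\pi|L^{(0)}|-\pi\sum_F(n_F-2)-\pi\sum_v\chi(\lk(v)).
\]

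It then remains to recognize the right-hand side as $2\pi\chi(L)$, which I would do by a second double count at the level of the links. The vertices of $\lk(v)$ correspond to the ends at $v$ of the edges of $L$, and its edges to the corners of faces at $v$, so $\chi(\lk(v))$ equals the number of edge-ends at $v$ minus the number of corners at $v$. Summing over $v$ and using that $L$ has no loops—so every edge contributes exactly two ends—gives $\sum_v\chi(\lk(v))=2E-\sum_F n_F$, where $E$ is the number of edges and $\sum_F n_F$ is the total number of corners. Substituting this together with $\sum_F(n_F-2)=\sum_F n_F-2|F|$ makes the two occurrences of $\sum_F n_F$ cancel, collapsing the right-hand side to $2\pi\bigl(|L^{(0)}|-E+|F|\bigr)=2\pi\chi(L)$, where $|F|$ denotes the number of faces.

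The computation is short once the relations are in place; the only step needing care—and the only place the polygonal rather than simplicial structure really matters—is the link bookkeeping, namely matching the $0$- and $1$-cells of each $\lk(v)$ with the edge-ends and corners of $L$ so that $\sum_v\chi(\lk(v))$ is expressed correctly through $E$ and $\sum_F n_F$. I would also note the no-loops hypothesis explicitly, since a loop edge would contribute two ends at a single vertex; this case is excluded in the present setting, so no correction term is required.
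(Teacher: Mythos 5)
Your proof is correct. The paper itself does not prove this theorem---it states it with a citation to Ballmann--Buyalo and Wise---and your argument is precisely the standard one found in those sources: the corner weights cancel because each corner lies in exactly one face and at exactly one vertex, and the remaining constants collapse to $2\pi(V-E+|F|)$ via the identification of the $0$- and $1$-cells of each $\lk(v)$ with edge-ends and corners at $v$. Two minor remarks: your generalization to polygonal faces with $\kappa(F)=\bigl(\sum_{c\in F}\omega(c)\bigr)-(n_F-2)\pi$ is exactly Wise's setting and specializes correctly to the paper's triangles ($n_F=3$); and the no-loops caveat is actually unnecessary for the bookkeeping, since a loop at $v$ still contributes two edge-ends at $v$ (hence two vertices of $\lk(v)$), so $\sum_v\chi(\lk(v))=2E-\sum_F n_F$ holds regardless---though in the paper's quasi-simplicial setting loops are excluded anyway, so your extra care is harmless.
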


Now we are ready to prove the linear isoperimetric inequality.

\begin{thm}\label{linear}
Let $X$ be a strictly systolic angled complex. Then there exists a constant $K>0$ such that
$$\area(\gamma) \leq Kl(\gamma),$$
for every closed edge-path $\gamma$ in $X$.
\end{thm}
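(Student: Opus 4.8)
The plan is to fix a closed edge-path $\gamma$, produce a particularly well-behaved singular diagram $f\colon D\to X$ for it, and then read off the inequality from the combinatorial Gauss-Bonnet theorem applied to $D$ with the weights pulled back from $X$. First I would invoke Lemma \ref{nondegenerate} to obtain a nondegenerate diagram, then apply the edge reductions and diamond moves described above to make it vertex reduced, and finally apply Lemma \ref{reduced} to arrange that the image of the link of every interior vertex decomposes into simple cycles of angular length $\geq 2\pi$. Since none of these steps increases the number of faces, the resulting disk $D$ satisfies $\area(\gamma)\leq |D|$, so it suffices to bound $|D|$ linearly in $l(\gamma)$.

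Next I would analyze the three types of curvature contributions in $\sum_{F}\kappa(F)+\sum_{v}\kappa(v)=2\pi\chi(D)=2\pi$. For a face, the defining hypothesis of a strictly systolic complex gives $\kappa(F)=\big(\sum_{c\in F}\omega(c)\big)-\pi<0$ for every $2$-simplex; crucially, since the image of $w$ is finite there are only finitely many possible values of $\sum_{c\in F}\omega(c)$, so $\varepsilon:=\pi-\max_F\sum_{c\in F}\omega(c)>0$ and $\kappa(F)\leq-\varepsilon$ uniformly. For an interior vertex $v$ the link in $D$ is a circle, so $\chi(\lk(v))=0$ and $\kappa(v)=2\pi-\sum_{c\in v}\omega(c)$; because the pulled-back angular length $\sum_{c\in v}\omega(c)$ equals the total length of the image of $\lk(v)$, which by Lemma \ref{reduced} is a union of simple cycles each of angular length $\geq 2\pi$, we get $\sum_{c\in v}\omega(c)\geq 2\pi$ and hence $\kappa(v)\leq 0$. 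For a boundary vertex the link is an arc, so $\chi(\lk(v))=1$ and $\kappa(v)=\pi-\sum_{c\in v}\omega(c)\leq\pi$ since the weights are nonnegative.

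Finally I would combine these estimates. The number of boundary vertices is at most $l(\gamma)$, so summing the contributions yields $2\pi\leq-\varepsilon\,|D|+\pi\,l(\gamma)$, whence $|D|\leq(\pi/\varepsilon)\,l(\gamma)$ and therefore $\area(\gamma)\leq K\,l(\gamma)$ with $K=\pi/\varepsilon$ depending only on $X$. I expect the main obstacle to be not the Gauss-Bonnet bookkeeping but the passage to a diagram satisfying the conclusion of Lemma \ref{reduced} while keeping the face count under control, namely verifying that the reduction process terminates and that interior vertex links genuinely map to unions of cycles of angular length $\geq 2\pi$; a close second is the observation that it is precisely the \emph{strict} inequality in the definition (rather than merely $\leq\pi$) that, combined with finiteness of the image of $w$, upgrades the per-face negativity into the uniform constant $\varepsilon>0$ driving the linear bound.
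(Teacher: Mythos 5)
Your proposal is correct and follows essentially the same route as the paper: pass to a nondegenerate, vertex reduced diagram satisfying Lemma \ref{reduced}, pull back the weights, and apply the combinatorial Gauss--Bonnet theorem with a uniform negative face-curvature bound (coming from finiteness of the image of $w$ and the strict inequality in the definition), nonpositive curvature at interior vertices, and a bounded curvature contribution from the at most $l(\gamma)$ boundary vertices. The only difference is cosmetic: at boundary vertices you use $\chi(\lk(v))\geq 1$ (note the link can be a disjoint union of arcs at cut vertices, which only helps) to get $\kappa(v)\leq\pi$ and hence $K=\pi/\varepsilon$, whereas the paper uses the cruder bound $\kappa(v)\leq 2\pi$ and the constant $2\pi/(-M)$.
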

\begin{proof}
We will find a positive constant $K$ such that for any closed edge-path $\gamma$, there exists a singular diagram $g:D\to X$ for $\gamma$ with $|D|\leq Kl(\gamma)$. Given $\gamma$, take a nondegenerate and vertex reduced singular diagram $g:D\to X$ satisfying the conditions of Lemma \ref{reduced}. Since $g$ is nondegenerate, we can pullback $w$ to $D$ via $g$. Since the sum of the internal weights of each face of $X$ is $<\pi$,  then $\kappa(F)<0$ for every face $F$ of $D$. Furthermore, since the image of $w$ is finite, $\kappa(F)\leq M < 0$ for a certain negative constant $M$ independent from $g,D$ and $\gamma$. The image of the link of each interior vertex can be decomposed into simple cycles of angular length $\geq2\pi$. Then $\kappa(v)\leq0$ if $v$ is an interior vertex of $D$. Now we apply the combinatorial Gauss-Bonnet theorem to $D$ and we get
$$M|D|\geq \sum_{F\in\text{faces}(D)}\kappa(F) = 2\pi\chi(D) - \sum_{v\in D^{(0)}}\kappa(v) = 2\pi - \sum_{v\in D^{(0)}}\kappa(v),$$
and therefore
$$|D| \leq \frac{1}{-M}(\sum_{v\in D^{(0)}}\kappa(v)-2\pi) \leq \frac{1}{-M}(\sum_{v\in \partial D^{(0)}}\kappa(v)-2\pi),$$
where $\partial D$ denotes the boundary of $D$. Note that the number of vertices in $\partial D$ is less than or equal to $l(\gamma)$. Since the links of the vertices in $\partial D$ have nonnegative Euler characteristic, and since the weight is nonnegative, their curvature is at most $2\pi$. Setting  $K=\frac{2\pi}{-M}$, we obtain $|D|\leq K l(\gamma)$.
\end{proof}

In the light of Theorem \ref{linear}, and by \cite[III.2.9]{BH}, we obtain the following corollaries.

\begin{cor}\label{maincor1}
The 1-skeleton $X^{(1)}$ of a strictly systolic angled complex $X$ with its standard geodesic metric is hyperbolic. More generally, if we endow $X$ with a piecewise Euclidean metric with $Shapes(X)$ finite, then $X$ is hyperbolic.
\end{cor}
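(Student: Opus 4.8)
The plan is to transfer the combinatorial linear isoperimetric inequality of Theorem~\ref{linear} into metric hyperbolicity by means of the cited isoperimetric characterization \cite[III.2.9]{BH}. First I would put on $X^{(1)}$ the standard geodesic metric in which each edge has length~$1$. Since $X$ is locally finite and connected, this makes $X^{(1)}$ a complete, proper geodesic metric space; and since $X$ is simply connected, every closed edge-path bounds a singular diagram $f\colon D\to X$, so the area function of Theorem~\ref{linear} is defined on all loops of $X^{(1)}$ and the length of an edge-loop $\gamma$ in this metric equals $l(\gamma)$.

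Next I would verify that this setup meets the hypotheses of \cite[III.2.9]{BH}. That criterion upgrades a linear isoperimetric inequality on a geodesic space to $\delta$-hyperbolicity, once the local geometry is uniformly bounded so that the combinatorial area (the number of $2$-simplices in a filling disk) is comparable to the metric filling area appearing in the statement. In our situation all $2$-cells are triangles and $X$ is locally finite, so this uniform bound is automatic, and Theorem~\ref{linear} supplies precisely the required inequality $\area(\gamma)\le K\,l(\gamma)$. Applying \cite[III.2.9]{BH} then gives that $X^{(1)}$ is hyperbolic.

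For the piecewise Euclidean metric with $\mathrm{Shapes}(X)$ finite, I would invoke quasi-isometry invariance of hyperbolicity. Finiteness of $\mathrm{Shapes}(X)$ means there are only finitely many isometry types of simplices, so $X$ is a complete geodesic space and every simplex has diameter bounded above and edge-lengths bounded away from $0$ and $\infty$. Consequently every point of $X$ lies within a uniformly bounded distance of $X^{(1)}$, and on the vertex set the graph metric and the restriction of the piecewise Euclidean metric are bi-Lipschitz equivalent; hence the inclusion $X^{(1)}\hookrightarrow X$ is a quasi-isometry. As $X^{(1)}$ is hyperbolic and hyperbolicity is a quasi-isometry invariant among geodesic spaces, $X$ is hyperbolic.

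The step I expect to be most delicate is the matching of areas underlying the application of \cite[III.2.9]{BH}: one must confirm that the combinatorial area used in Theorem~\ref{linear}, counting $2$-simplices in a singular disk, is governed by the same isoperimetric function as in the cited criterion, and that replacing singular diagrams by whatever fillings are used there preserves the linearity of the bound. The bounded-geometry hypotheses---local finiteness, and in the metric case the finiteness of $\mathrm{Shapes}(X)$---are exactly what render all these comparisons uniform and thus allow the linear inequality to survive the passage to the metric statement.
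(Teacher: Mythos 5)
Your proposal is correct and follows exactly the paper's (implicit) argument: the paper derives Corollary~\ref{maincor1} directly from Theorem~\ref{linear} together with \cite[III.2.9]{BH}, and your fleshing out of the two standard comparisons --- combinatorial filling area versus the coarse area used in \cite{BH}, and the quasi-isometry $X^{(1)}\hookrightarrow X$ when $Shapes(X)$ is finite --- is precisely what that citation leaves to the reader.
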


Recall that $Shapes(X)$ is the set of isometry classes of the simplices of $X$ (see \cite{BH}). 

A group $\Gamma$ which acts properly and cocompactly by simplicial automorphisms on a strictly systolic angled complex is called a \textit{strictly systolic} group. Note that, similarly as in \cite[Theorem 3.1]{HO}, since the class of locally $2\pi$-large, $3$-flag angled complexes is closed under taking full subcomplexes and covers, by \cite[Theorem 1.1]{HP} finitely presented groups of strictly systolic groups are strictly systolic. From Corollay \ref{maincor1} we obtain the following result.
 
\begin{cor}\label{maincor2}
Strictly systolic groups are hyperbolic. Moreover, any finitely presented group of a strictly systolic group is strictly sytolic, and hence, hyperbolic.
\end{cor}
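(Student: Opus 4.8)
The plan is to prove the two assertions in sequence, deriving the statement about subgroups from the statement about the ambient group.

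For the first assertion, I would equip $X$ with its standard piecewise Euclidean metric, realizing every $n$-simplex as a regular Euclidean simplex with unit edges. Since a strictly systolic group $\Gamma$ acts cocompactly by simplicial automorphisms, there are only finitely many $\Gamma$-orbits of simplices, so $\mathrm{Shapes}(X)$ is finite and Corollary~\ref{maincor1} applies: $X$ is a hyperbolic geodesic space, which, being locally finite with finitely many shapes, is complete and proper. As simplicial automorphisms preserve this metric, $\Gamma$ acts on $X$ properly, cocompactly, and by isometries. The \v{S}varc--Milnor lemma \cite[I.8.19]{BH} then yields that $\Gamma$ is quasi-isometric to $X$, and since hyperbolicity of geodesic spaces is a quasi-isometry invariant, $\Gamma$ is hyperbolic.

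For the second assertion, let $H\le\Gamma$ be finitely presented. The strategy is to manufacture a strictly systolic angled complex on which $H$ acts geometrically and then invoke the first part. Here I would use Hanlon and Mart\'inez-Pedroza's \cite[Theorem~1.1]{HP}: applied to the geometric action of $\Gamma$ on the simply connected complex $X$, it produces a geometric action of $H$ on a simply connected complex $Y$ realized as an equivariant tower over $X$, that is, through a finite sequence of passages to covers and to full subcomplexes. The tower construction guarantees that $Y$ is simply connected, so what must be verified is that the remaining defining conditions survive both operations. The angled structure on $Y$ is obtained by pulling the weight function back along covering maps and restricting it along inclusions; this leaves the weights of individual corners untouched and has finite image, so the $3$-flag condition and the strict bound $\sum_{c\in F}w(c)<\pi$ on each triangle are inherited at once, being local conditions on simplices.

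The step that requires genuine care --- and the one I expect to be the main obstacle --- is the locally $2\pi$-large condition under passage to a full subcomplex $Y\subseteq X$, where the links of $Y$ genuinely differ from those of $X$. I would show that a simple cycle $\sigma$ that is $2$-full in $\lk_Y(v)$ is already $2$-full in $\lk_X(v)$: any chord of $\sigma$ in $\lk_X(v)$ joining two vertices with a common neighbour in $\sigma$ corresponds to a triangle of $X$ all of whose vertices already lie in $Y$ (their $v$-edges being link-vertices of $\sigma$), hence lies in $Y$ by fullness, so the chord already belongs to $\lk_Y(v)$, contradicting the $2$-fullness of $\sigma$ in $Y$. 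Since restriction does not alter weights, $\sigma$ then has angular length $\ge 2\pi$ because $X$ is locally $2\pi$-large; for covers the same conclusion is immediate, as the covering map restricts to an isomorphism on each vertex link. Hence $Y$ is a strictly systolic angled complex, $H$ is a strictly systolic group, and the first part shows $H$ is hyperbolic. The only real subtlety throughout is this bookkeeping of links under fullness; the cover and weight-transport steps are routine.
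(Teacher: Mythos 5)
Your proposal is correct and takes essentially the same route as the paper: the first assertion follows from Corollary \ref{maincor1} together with the \v{S}varc--Milnor lemma, and the second from \cite[Theorem 1.1]{HP} once one knows that the class of locally $2\pi$-large, $3$-flag angled complexes is closed under covers and full subcomplexes. The only difference is one of detail: the paper merely asserts this closure property (in analogy with \cite[Theorem 3.1]{HO}), whereas you verify it, and your chord argument showing that a $2$-full cycle in the link of a full subcomplex is already $2$-full in the ambient link is precisely the verification the paper leaves implicit.
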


Note that in Corollary \ref{maincor1}, the angles (or weights) of the strictly systolic angled complex $X$ are independent of the metric. However, if we are given a simplicial or quasi-simplicial complex $X$ with a piecewise Euclidean metric on the $2$-skeleton $X^{(2)}$ with a certain link condition, we can define a weight function that makes $X$ a strictly systolic angled complex. The following definition is analogous to the notion of \textit{metrically systolic complex} introduced by Huang and Osajda \cite{HO}. It is, in some sense, the metric counterpart to the $7$-systolic simplicial complexes of \cite{JS}.

\begin{definition}
A \textit{metrically strictly systolic} complex is a simply connected and $3$-flag quasi-simplicial complex $X$  such that its $2$-skeleton is equipped with a piecewise Eucliden metric (with finite shapes) and such that the angular distance induced in the links of the vertices satisfies the weak triangular inequality and the vertex links (with the angular distance) are strictly $2\pi$-large, i.e. every $2$-full cycle has angular length $>2\pi$.
\end{definition}

\begin{prop}
Metrically strictly systolic complexes are strictly systolic angled complexes.
\end{prop}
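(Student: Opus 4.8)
The plan is to verify, one condition at a time, that the definition of a metrically strictly systolic complex supplies exactly the data required by the definition of a strictly systolic angled complex. The key observation is that the piecewise Euclidean metric on $X^{(2)}$ canonically produces the weight function: for each corner $c$ of a $2$-simplex at a vertex $v$, the corner corresponds to an edge $\alpha$ in $\lk(v)$, and we set $w(\alpha)$ equal to the angular distance of that corner, i.e. the Euclidean angle subtended at $v$ by the corresponding triangle. Because each simplex is a Euclidean triangle, these angles are nonnegative and sum to $\pi$ in each face; the \emph{strict} inequality $\sum_{c\in F}w(c)<\pi$ that we need is not automatic from a single triangle, so here I would invoke nondegeneracy of the Euclidean triangles together with the assumption of finite shapes. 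Since there are only finitely many isometry classes of triangles, each contributing angle sum exactly $\pi$, one must check that the angled-complex framework is set up so that the relevant ``internal'' weight sum is the genuine triangle angle sum; if the three angles of a Euclidean triangle sum to exactly $\pi$ rather than strictly less, I would point out that the definition of metrically strictly systolic already builds in the strict inequality through the strict $2\pi$-largeness, and that the required condition on triangles follows because the weights are the Euclidean angles of nondegenerate triangles, whose sum is $\pi$; this is the one point that must be reconciled with the word ``strictly,'' and I flag it as the main obstacle.

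Second I would address the remaining structural hypotheses, which transfer essentially verbatim. Simple connectivity and the $3$-flag condition are assumed outright in the definition of a metrically strictly systolic complex, so nothing is needed there. The weak triangular inequality for $w$ is precisely the weak triangular inequality assumed for the angular distance in the links, since $w$ is by construction the angular distance on link edges; thus the inequality $w(\alpha_{13})\le w(\alpha_{12})+w(\alpha_{23})$ holds by hypothesis. Likewise, the image of $w$ is finite because there are only finitely many shapes, and hence only finitely many distinct corner angles, so the finiteness-of-image requirement is met.

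Third, and most importantly, I would deduce the locally $2\pi$-large condition. By definition the angular length of a path in a link is the sum of the weights of its edges, which under our identification is exactly the sum of the corresponding angular distances, i.e. the angular length in the metric sense. The metrically strictly systolic hypothesis asserts that every $2$-full cycle in every vertex link has angular length $>2\pi$, which in particular gives angular length $\ge 2\pi$; this is exactly the locally $2\pi$-large condition. Therefore all four defining properties of a strictly systolic angled complex---simple connectivity, local $2\pi$-largeness, the $3$-flag property, and the triangle weight sum being strictly less than $\pi$---are satisfied by $(X,w)$.

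Assembling these verifications completes the proof: the metric angles define a legitimate weight function, the strict $2\pi$-largeness upgrades to local $2\pi$-largeness, and the nondegenerate Euclidean triangles furnish the triangle condition, so every metrically strictly systolic complex, equipped with its angle-induced weights, is a strictly systolic angled complex. I expect the routine transfer of simple connectivity, $3$-flagness, finiteness, and the weak triangular inequality to be immediate, while the genuinely delicate point is reconciling the triangle weight condition with the Euclidean fact that angles sum to $\pi$, which is where the strictness and the nondegeneracy of shapes must be used carefully.
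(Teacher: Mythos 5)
Your handling of the structural conditions (simple connectivity, the $3$-flag property, finiteness of the image of $w$, the weak triangular inequality, and weakening strict $2\pi$-largeness to $\geq 2\pi$) is fine, but the point you yourself flag as ``the main obstacle'' is a genuine gap, and the resolution you sketch does not work. If you take $w$ to be the actual Euclidean angles of the piecewise Euclidean metric, then the sum of the corner weights of \emph{every} triangle is exactly $\pi$, which violates the strict inequality $\sum_{c\in F}w(c)<\pi$ required by the definition of a strictly systolic angled complex. No appeal to nondegeneracy of the shapes or to strict $2\pi$-largeness can repair this, because the angle sum of a nondegenerate Euclidean triangle is always exactly $\pi$, never less; with your choice of $w$ the claimed conclusion is simply false. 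Note also that your argument uses the strictness of the $2\pi$-largeness hypothesis only to deduce the weaker bound $\geq 2\pi$, i.e.\ the strictness is thrown away, which is a sign that the proof cannot close: that strictness is precisely what must compensate for the missing strictness in the triangle condition.

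The paper's proof consists exactly of the idea you are missing: \emph{perturb} the weights rather than take them verbatim. Since $X^{(2)}$ has finitely many shapes, there are only finitely many corner angles, hence a minimal angle $\theta_{\min}>0$, and together with strict $2\pi$-largeness one gets a uniform constant $L>2\pi$ such that every $2$-full cycle has angular length $\geq L$ (cycles with few edges realize only finitely many angular lengths, all $>2\pi$, while long cycles have angular length at least their length times $\theta_{\min}$). One then defines $w$ by subtracting a fixed, sufficiently small $\delta>0$ from every angle: each triangle now has weight sum $\pi-3\delta<\pi$, the weights remain nonnegative, and for $\delta$ small relative to $L-2\pi$ and $\theta_{\min}$ every $2$-full cycle still has angular length $\geq 2\pi$. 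This is where the strict inequality $>2\pi$ in the definition of metrically strictly systolic is genuinely used: it creates the room into which the perturbation can be made. Without this perturbation step, your argument cannot be completed.
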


\begin{proof}
Since $X^{(2)}$ has finite shapes, there exists $L>2\pi$ such that every $2$-full cycle has angular length $\geq L$. Then we can define an appropriate weight in the corners by subtracting a fixed small enough $\delta>0 $ from every angle.
\end{proof}

Note that Corollaries \ref{maincor1} and \ref{maincor2} generalize Januszkiewickz and \'Swi\c{a}tkowski's results on $7$-systolic simplicial complexes and groups \cite[Theorem 2.1 and Corollary 2.2]{JS} since any $7$-systolic simplicial complex with the standard Euclidean metric on its $2$-skeleton (with all triangles equilateral) is metrically strictly systolic.

\section{Hyperbolicity of one-relator groups}

 It is well known that the groups which admit finite presentations satisfying the metric small cancellation condition $C'(1/6)$ are hyperbolic \cite{Gro}. Recall that a finite presentation satisfies the $C'(\lambda)$ condition (with $0<\lambda<1$) if every piece $W$ in a relator $R$ has $l(W)< \lambda l(R)$. Here $l(W)$ denotes the length of the word $W$. Also the conditions $C'(1/4) - T(4)$ imply hyperbolicity \cite{GS}. The main result of this section generalizes both results for one-relator groups. Since all one-relator groups with torsion are hyperbolic, we will consider only one-relator groups without torsion. They are given by presentations $P=\langle \mathcal{A}\ |\ R  \rangle$, where $\mathcal{A}$ is finite and $R$ is a cyclically reduced word which is not a proper power. Note that no proper subword of $R$ is trivial in the presented group $\Gamma$ (see \cite{Wei}).
 
In the case of one-relator presentations, the condition $T(4)$ can be restated as follows: the cyclically reduced word $R$ does not contain pieces $W_1,W_2,W_3$ such that $W_1W_2$, $W_1W_3$ and $W_2^{-1}W_3$ are nonempty subwords of $R$ or $R^{-1}$ or any cyclic permutation of them. 

We will apply Corollary \ref{maincor2} to prove that  $C'(1/4)$ together with a much weaker condition than $T(4)$ guarantees hyperbolicity of one-relator groups. Contrary to condition $T(4)$, we allow the existence of pieces $W_1,W_2,W_3$ such that $W_1W_2$, $W_1W_3$ and $W_2^{-1}W_3$ are nonempty subwords, but we impose a condition on their lengths. Concretely,  condition $T(4)$ is replaced by the following weaker \textit{Condition (T')}. 

\textit{Condition (T'):}  If there exist pieces $W_1,W_2,W_3$ of $R$ such that $W_1W_2$, $W_1W_3$ and $W_2^{-1}W_3$ are nonempty subwords of $R$ or $R^{-1}$ or any cyclic permutation of them, then $l(W_1)+l(W_2)+l(W_3)<l(R)/2$.

The main result of this section is the following.

\begin{thm}\label{onerelator}
Let $\Gamma$ be a one-relator group with presentation $P=\langle \mathcal{A}\ |\ R \rangle$. If $P$ satisfies the metric small cancellation condition $C'(1/4)$ and Condition (T'), then $\Gamma$ is hyperbolic.
\end{thm}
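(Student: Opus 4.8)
The plan is to exhibit $\Gamma$ as a strictly systolic group and then quote Corollary \ref{maincor2}. I would start from the Cayley $2$-complex $\tilde X_P$ of $P$ (the universal cover of the presentation complex): it is simply connected and carries a proper cocompact action of $\Gamma$ with compact quotient, and each of its $2$-cells is an $l(R)$-gon. First I would triangulate every polygon equivariantly by coning it off from a new central vertex, obtaining a locally finite quasi-simplicial complex $X$. Coning is a subdivision, so $X$ is still simply connected and still admits a geometric $\Gamma$-action. Because every $2$-simplex of $X$ contains exactly one central vertex and distinct centres belong to disjoint disks, no four vertices of $X$ can span three faces of a common tetrahedron; hence $X$ is (vacuously) $3$-flag. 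Torsion-freeness of $\Gamma$ (so that $R$ is not a proper power and no proper subword of $R$ is trivial) guarantees that distinct disks meet precisely along pieces, which is what lets the small cancellation hypotheses control the geometry of $X$.

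It then remains to choose a weight function making $X$ locally $2\pi$-large with each triangle of internal weight sum $<\pi$. The corners at a central vertex $c$ form a single cycle --- the link of $c$ --- of length $l(R)$, and I would assign the central weights so that they sum to exactly $2\pi$ around each centre; since $c$ lies in only one disk its link has no chords and is automatically $2$-full, so this cycle has angular length $\geq 2\pi$. The decisive simplification is at a group vertex $g$: every edge of $\lk(g)$ joins a generator direction to a centre direction, so $\lk(g)$ is bipartite, every simple cycle in it is automatically $2$-full, and --- since each centre direction has degree $2$ --- the angular length of such a cycle equals the sum, over the disks it traverses, of the full angle $\Theta(g)$ (the sum of the two base weights) of each disk at $g$. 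Thus local $2\pi$-largeness reduces to a purely combinatorial condition: for every cyclic family of disks around $g$, the corresponding full angles sum to at least $2\pi$.

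The heart of the argument, and the step I expect to be hardest, is to produce a single position-dependent weight assignment meeting this condition together with the constraints that the central weights sum to $2\pi$ and that each triangle sums to $<\pi$. Here the small cancellation hypotheses enter through the two minimal configurations. A cyclic family of two disks corresponds to two polygons sharing a length-$2$ subpath at $g$, i.e. a common piece; I would calibrate the base weights, as a function of the position of $g$ in $R$ and of the lengths of the pieces through $g$, so that $C'(1/4)$ forces the two full angles to sum to at least $2\pi$. A cyclic family of three disks is exactly the configuration of pieces $W_1,W_2,W_3$ with $W_1W_2$, $W_1W_3$ and $W_2^{-1}W_3$ appearing in $R^{\pm1}$, and the bound $l(W_1)+l(W_2)+l(W_3)<l(R)/2$ of Condition (T') is precisely what is needed to force the three full angles to sum to at least $2\pi$. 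Longer cyclic families involve more positive terms and should be arranged to exceed $2\pi$ automatically, so the whole link condition would come down to these two inequalities. The technical difficulty is to make these local calibrations globally consistent and simultaneously compatible with the per-triangle bound; balancing the angle borrowed between piece positions and the rest of the polygon, within the budget imposed by the centre condition, is the real work, and it is exactly there that the numerical thresholds $1/4$ and $1/2$ are used.

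With such a weight function, $X$ is a simply connected, locally $2\pi$-large, $3$-flag angled complex whose triangles have internal weight sum $<\pi$, i.e. a strictly systolic angled complex, and $\Gamma$ acts on it properly and cocompactly by simplicial automorphisms. By Corollary \ref{maincor2}, $\Gamma$ is a strictly systolic group and hence hyperbolic; the same corollary also yields hyperbolicity of its finitely presented subgroups.
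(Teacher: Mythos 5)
Your first step---coning each precell of $\tilde K_P$ from a new central vertex---agrees with the paper, but you stop the construction there, and that is exactly where the argument breaks. The paper's complex is not the bare coned-off Cayley complex: whenever two precells $C_1,C_2$ intersect, the paper adds an edge between the centers $c_1,c_2$ for \emph{each connected component} of $C_1\cap C_2$, triangles $\{c_1,c_2,v\}$ for every vertex $v$ of that component, triangles on triples of centers for triple intersections, and the tetrahedra needed for $3$-flagness. These extra simplices are not decoration: they put chords into the link of every original (group) vertex, and the paper shows that after this enlargement those links have \emph{no} $2$-full cycles at all. Consequently the link condition at group vertices becomes vacuous, all corners at group vertices can be given weight $0$, and the entire $2\pi$-largeness burden is carried by the links of the centers, where the weights are defined from piece lengths (type (2) triangles get $\frac{l}{r}\pi$ at each center, type (3) triangles get $\frac1r(l_{12}+l_{13}-2l_{123})\pi$, etc.). The hypotheses $C'(1/4)$ and (T') enter exactly once: to make the angle sums of the type (2) and type (3) triangles less than $\pi$; the center-link condition is then checked by an inclusion-exclusion count.

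In your complex, by contrast, the group-vertex links are bipartite with no chords, so (as you correctly note) \emph{every} simple cycle there is $2$-full, and you must produce nonnegative, position-dependent full angles $\theta_i$ at group vertices making every such cycle have angular length $\geq 2\pi$, while every triangle sums to $<\pi$ (which, once the center angles sum to $2\pi$, forces $\sum_i\theta_i<(r-2)\pi$). This is precisely the hyperbolic weight test of Gersten and Huck--Rosebrock with nonnegative weights: simple cycles in your links correspond bijectively, and weight-preservingly, to simple cycles in the star graph of $P$, since every star-graph cycle is realized by actual pairwise distinct precells around a vertex of the Cayley complex (boundaries of precells are embedded because no proper subword of $R$ is trivial, and distinct positions give distinct precells because $R$ is not a proper power). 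So the step you defer as ``the real work'' is not merely technical---it is impossible in general. The paper's own example $\langle a,b \mid a^4babab^{-1}a^{-1}b^3a^{-1}b\rangle$ satisfies $C'(1/4)$ and (T') but, as the paper points out, satisfies no hyperbolic weight test; for that presentation the calibration you are hoping for does not exist, and your complex cannot be made strictly systolic by any choice of weights. The center-to-center edges and the accompanying triangles---which exploit the fact that components of intersections of precells are pieces and can therefore be weighted by their lengths---are exactly the mechanism the paper introduces to get past this weight-test barrier, and they are what make $C'(1/4)$ together with (T') sufficient.
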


Note that a  $C'(1/6)$ one-relation presentation automatically satisfies Condition (T').

To prove this theorem, we will construct a strictly systolic angled complex from $P$, on which $\Gamma$ acts properly and cocompactly by simplicial automorphisms. This construction is inspired in Huang and Osajda's construction for Artin groups \cite{HO} but it is adapted to the geometry of these groups.

Before we proceed with the proof, we illustrate the result with an example.

\begin{example}
Consider the presentation $P=\langle a,b | a^4babab^{-1}a^{-1}b^3a^{-1}b \rangle$. The presented group satisfies the hypotheses of the theorem and, hence, it is hyperbolic. Note that it is neither $C'(1/6)$ nor $T(4)$. Also it does not satisfy any hyperbolic weight test \cite{Ger, HR} and it is not in any of the families classified by Ivanov and Schupp \cite{IS}.
\end{example}

\begin{proof}[Proof of Theorem \ref{onerelator}]

Let $r=l(R)$. We can suppose that $r\geq 4$ since $\Gamma$ is clearly hyperbolic when $r\leq 3$.
Let $K_P$ be the standard $2$-complex associated to $P$. Recall that $K_P$ has one $0$-cell, one $1$-cell for each generator $a\in \mathcal{A}$ and one $2$-cell corresponding to the word $R$. We denote by $\tilde{K}_P$ its universal cover. Following the terminology of \cite{HO}, the $2$-cells of $\tilde{K}_P$ (corresponding to all the liftings of the unique $2$-cell of $K_P$) will be called precells. We triangulate each precell of $\tilde{K}_P$ by adding a central vertex. 

Now, if two precells intersect, they do so in a disjoint union of vertices and paths. This is because no proper subword of $R$ is trivial in $\Gamma$. Notice that each intersection, when it is not a single vertex, amounts to a piece in $R$. Let $C_1$ and $C_2$ be two intersecting precells with corresponding centers $c_1$ and $c_2$. For each connected component of their intersection we add an edge between $c_1$ and $c_2$. Note that there could be more than one edge between two centers. Let $v_1,...,v_k$ be the vertices of one component of the intersection. Then for each $i$ we also add triangles with vertices $\{c_1,c_2,v_i\}$ (one of the edges of the boundary of the triangle is the corresponding edge between the centers). We fill the necessary tetrahedrons for the complex to be $3$-flag (see Figure \ref{figsix}).

\begin{figure}[h]
   \begin{tikzpicture}[scale=0.9]
    \draw[thick] (0,0) -- (0,-1) -- (1,-2) -- (2,-2) -- (3,-2) -- (3,1) -- (2,1) -- (1,1) -- (0,0);
    \draw[thick] (3,1) -- (3,-2) -- (4,-2) -- (5,-2) -- (6,-1) -- (6,0) -- (5,1) -- (4,1) -- (3,1);
    \filldraw (1.5,-0.5) circle (2pt);
    \filldraw (4.5,-0.5) circle (2pt);
    \filldraw (0,0) circle (2pt);
    \filldraw (0,-1) circle (2pt);
    \filldraw (1,-2) circle (2pt);
    \filldraw (2,-2) circle (2pt);
    \filldraw (3,-2) circle (2pt);
    \filldraw (3,1) circle (2pt);
    \filldraw (2,1) circle (2pt);
    \filldraw (1,1) circle (2pt);
    \filldraw (4,-2) circle (2pt);
    \filldraw (5,-2) circle (2pt);
    \filldraw (6,-1) circle (2pt);
    \filldraw (6,0) circle (2pt);
    \filldraw (5,1) circle (2pt);
    \filldraw (4,1) circle (2pt);
    \filldraw (3,0) circle (2pt);
    \filldraw (3,-1) circle (2pt);
    
    \draw[thick] (1.5,-0.5) -- (0,0);
    \draw[thick] (1.5,-0.5) -- (0,-1);
    \draw[thick] (1.5,-0.5) -- (1,-2);
    \draw[thick] (1.5,-0.5) -- (2,-2);
    \draw[thick] (1.5,-0.5) -- (3,-1);
    \draw[thick] (1.5,-0.5) -- (3,0);
    \draw[thick] (1.5,-0.5) -- (2,1);
    \draw[thick] (1.5,-0.5) -- (1,1);
    \draw[thick] (1.5,-0.5) -- (3,1);
    \draw[thick] (1.5,-0.5) -- (3,-2);
    \draw[thick] (4.5,-0.5) -- (3,0);
    \draw[thick] (4.5,-0.5) -- (3,-1);
    \draw[thick] (4.5,-0.5) -- (4,-2);
    \draw[thick] (4.5,-0.5) -- (5,-2);
    \draw[thick] (4.5,-0.5) -- (6,-1);
    \draw[thick] (4.5,-0.5) -- (6,0);
    \draw[thick] (4.5,-0.5) -- (5,1);
    \draw[thick] (4.5,-0.5) -- (4,1);
    \draw[thick] (4.5,-0.5) -- (3,1);
    \draw[thick] (4.5,-0.5) -- (3,-2);
    \draw[thick] (1.5, -0.5) -- (4.5, -0.5);
    \draw [thick, draw=black, fill=black, fill opacity=0.2] (1.5, -0.5) -- (3,0) -- (4.5, -0.5) -- (3,-1) -- cycle;
    \draw [thick, draw=black, fill=black, fill opacity=0.2] (1.5, -0.5) -- (3,1) -- (4.5, -0.5) -- (3,-2) -- cycle;
    
    \node[right] at (0,1) {$C_1$};
    \node[right] at (5.5,1) {$C_2$};
    \node[above right] at (4.5,-0.49) {$c_2$};
    \node[above left] at (1.5,-0.49) {$c_1$};
    \end{tikzpicture}
    \caption{Intersection of two precells.}
    \label{figsix}
\end{figure}
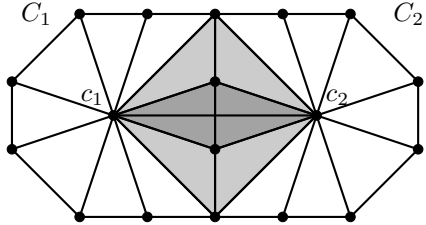

If three precells $C_1, C_2$ and $C_3$ intersect, we add triangles with vertices in the three centers (one for each component of $C_1\cap C_2\cap C_3$) and the necessary tetrahedrons for it to be $3$-flag. We denote by $X$ the complex that we obtained. Note that $X$ is quasi-simplicial since the presentation satisfies, in particular, the metric condition $C'(1/2)$. It is clear that $X$ is simply connected (since $\tilde{K}_P$ is) and it is $3$-flag by construction. Note that $\Gamma$ acts simplicially, properly and cocompactly on $X$ since the modifications that we made on $\tilde{K}_P$ are equivariant.

Now we define a weight function $w$ on $X$ and study under which conditions $(X,w)$ is a strictly systolic angled complex.  We will show below that, by construction of $X$, the links of the original vertices of $X$ do not have $2$-full cycles. Therefore we only need to control the weights of the corners at the central vertices and the sum of the internal angles of the triangles. 

There are three kinds of triangles in $X$:

\begin{enumerate}\itemsep0.3em
    \item triangles from the subdivision of a precell,
    \item triangles with two central vertices,
    \item triangles with three central vertices.
\end{enumerate}

Recall that $r$ denotes the length of the relator $R$. If the triangle is of type (1), then the weight assigned to the angle of the central vertex is $\frac{2\pi}{r}$, and the weights of the other two are $0$.

If the triangle is of type (2), the weight of the two central angles is $\frac{l}{r}\pi$, where $l$ is the length of the component of the intersection corresponding to that triangle. The remaining angle equals $0$.

If the triangle is of type (3), it corresponds to the intersection of three precells $C_1,C_2$ and $C_3$ with centers $c_1,c_2$ and $c_3$. Let $l_{12}$ be the length of the component of the intersection between $C_1$ and $C_2$. Analogously we define $l_{13}$, $l_{23}$ and $l_{123}$ (the length of the component of $C_1\cap C_2\cap C_3$). The weights at the angles at $c_1,c_2$ and $c_3$ are $\frac 1r.(l_{12}+l_{13}-2l_{123})\pi$,  $\frac 1r.(l_{12}+l_{23}-2l_{123})\pi$ and $\frac 1r.(l_{13}+l_{23}-2l_{123})\pi$ respectively. The sum of the angles is $\frac 1r .(2l_{12}+2l_{13}+2l_{23}-6l_{123})\pi$, which is less than $\pi$ if the presentation is $C'(1/4)$ and if, whenever the intersection of the three cells is a vertex, $l_{12}+l_{13}+l_{23}<\frac{r}{2}$, which is Condition (T').

Under these circumstances, all the triangles have inner weight less than $\pi$, and the triangle inequality is easily seen to be satisfied. 

Now we analyze the links of the vertices in $X$ and their $2$-full cycles. There are two types of vertices in $X$. The original vertices (vertices of $\tilde{K}_P$) and the central vertices. Note that the links of any two vertices of the same kind are equal, so we need to verify only two cases.

First we study the links of central vertices (see Figure \ref{figseven}). Let $c$ be a central vertex in a precell $C$. Its link has two kinds of vertices:
\begin{enumerate}\itemsep0.3em
    \item[(i)] those coming form edges of the triangulation of $C$,
    \item[(ii)] those corresponding to edges between $c$ and another central vertex.
\end{enumerate}

\begin{figure}[h]
\scalebox{0.8}{
   \begin{tikzpicture}
    \pgfmathsetmacro\myradi{12/2/pi}
    \draw[decoration={markings,
    mark=at position 1cm with \coordinate (c1);,
    mark=at position 2cm with \coordinate (c2);,
    mark=at position 3cm with \coordinate (c3);,
    mark=at position 4cm with \coordinate (c4);,
    mark=at position 5cm with \coordinate (c5);,
    mark=at position 6cm with \coordinate (c6);,
    mark=at position 7cm with \coordinate (c7);,
    mark=at position 8cm with \coordinate (c8);,
    mark=at position 9cm with \coordinate (c9);,
    mark=at position 10cm with \coordinate (c10);,
    mark=at position 11cm with \coordinate (c11);,
    mark=at position 12cm with \coordinate (c12);}, postaction={decorate}] 
    circle [thick, radius=\myradi cm];
    
    \filldraw (c1) circle (2pt);
    \filldraw (c2) circle (2pt);
    \filldraw (c3) circle (2pt);
    \filldraw (c4) circle (2pt);
    \filldraw (c5) circle (2pt);
    \filldraw (c6) circle (2pt);
    \filldraw (c7) circle (2pt);
    \filldraw (c8) circle (2pt);
    \filldraw (c9) circle (2pt);
    \filldraw (c10) circle (2pt);
    \filldraw (c11) circle (2pt);
    \filldraw (c12) circle (2pt);
    
    \filldraw (1,0) circle (2pt);
    \filldraw (0,1) circle (2pt);
    
    \draw[thick] (1,0) -- (c1);
    \draw[thick] (1,0) -- (c2);
    \draw[thick] (1,0) -- (c12);
    
    \draw[thick] (0,1) -- (c4);
    \draw[thick] (0,1) -- (c3);
    \draw[thick] (0,1) -- (c1);
    \draw[thick] (0,1) -- (c2);
    
    \draw[thick] (1,0) -- (0,1);
    \end{tikzpicture}}
\caption{Link of a central vertex. Vertices of type (i) are located in the exterior circle, and vertices of type (ii) in the interior.}
\label{figseven}
\end{figure}
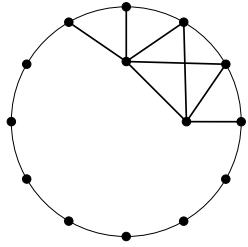

Let $\sigma$ be a $2$-full cycle in $\lk(c)$. We will show that its angular length is $\geq2\pi$. We examine the possible cases.

\textbf{Case 1} If $\sigma$ only passes through vertices of type (i), then $\sigma$ is a circle of angular length $2\pi$.

\textbf{Case 2} Suppose $\sigma$ only passes through vertices of type (ii). Those vertices correspond to paths in the boundary of the precell. Each path intersects another two paths (because $\sigma$ is a cycle) and no three paths have common intersection (because $\sigma$ is $2$-full). Therefore their union covers the boundary of the precell. Let $s_1,...,s_k$ be those paths. Then the angular length of $\sigma$ equals 
$$\frac{(l(s_1)+l(s_2)-l(s_1\cap s_2))+...+(l(s_k)+l(s_1)-l(s_k\cap s_1))}{r}\pi.$$
By an inclusion exclusion argument this is exactly $2\pi$.

\textbf{Case 3} Finally, suppose $\sigma$ passes through vertices of both kinds. Given an orientation for $\sigma$, let $v_1$ be a vertex of type (i) such that the following vertex in $\sigma$, say $u$, is of type (ii). Let $v_2$ be the next vertex of type (i) that appears (note that $v_1\neq v_2$). The vertex $u$ is adjacent to some vertices $v'_1,...,v'_l$ between $v_1$ and $v_2$. Denote by $[v_1,v_2]$ the edge-path passing only through vertices of type (i) that connects $v_1$ with $v_2$ and which contains $v'_1,...,v'_l$. By an argument analogous to that of case 2, the segment of $\sigma$ that goes from $v_1$ to $v_2$ has angular length $\geq \frac{2 l([v_1,v_2])}{r}\pi$. Inductively, the remaining segment of $\sigma$ has angular length $\geq \frac{2(r-l([v_1,v_2]))}{r}\pi$. Therefore $\sigma$ has angular length $\geq 2\pi$.

Finally we analyze the links of the original vertices. By the definition of $X$, the link of any original vertex in the $2$-skeleton of $X$ is obtained from the link of the vertex in the universal cover $\tilde{K}_P$ by subdividing all the edges (adding a new vertex in every edge of the original link), and then adding all the edges between all pairs of new vertices of the link. In particular, the links of the original vertices in $X^{(2)}$ do not have $2$-full cycles, and therefore the link condition around these vertices is automatically satisfied.
\end{proof}

Note that, by Corollary \ref{maincor2}, all finitely presented subgroups of the one-relator groups $\Gamma$ of Theorem \ref{onerelator} are also hyperbolic. But this fact can also be  deduced from Theorem \ref{onerelator} by a well known result of Gersten on finitely presented subgroups of hyperbolic groups in dimension $2$ \cite{Ger2}. Recall that one-relator groups without torsion have cohomological dimension $2$.

\begin{example}
The construction that we introduced in the proof of Theorem \ref{onerelator} can also be applied to prove hyperbolicity of one-relator groups which do not satisfy the hypotheses of the theorem. One of such examples is the group presented by $\langle a,t | at^{-1}ata^2t^{-2}a^{-1}t^2 \rangle$. This group appeared in \cite{Kap}. In \cite[Theorem A]{Kap} this group was proved to be hyperbolic with very different techniques.
\end{example}

\end{document}